\newtheorem{theorem}{Theorem}%[section]
\newtheorem{proposition}[theorem]{Proposition}
\theoremstyle{definition}
\theoremstyle{remark}
\numberwithin{equation}{section}
\newcommand{\abs}[1]{\left\vert#1\right\vert}
\newcommand{\set}[1]{\left\{#1\right\}}
\begin{document}
\title[]{Uniform spaces and the Newtonian structure of (big)data affinity kernels}
%
%

%%    Information for authors

\author[]{Hugo Aimar}
\email{haimar@santafe-conicet.gov.ar}
%%%
%%%
\author[]{Ivana G\'{o}mez}
\email{ivanagomez@santafe-conicet.gov.ar}
%%
%%
%\thanks{The research was supported  by CONICET, ANPCyT (MINCyT) and UNL}
%
\subjclass[2010]{Primary 54E15, 54E35}
\keywords{uniform spaces; metric spaces; affinity kernel}

\begin{abstract}
Let $X$ be a (data) set. Let $K(x,y)>0$ be a measure of the affinity between the data points $x$ and $y$.
We prove that $K$ has the structure of a Newtonian potential $K(x,y)=\varphi(d(x,y))$ with $\varphi$ decreasing
and $d$ a quasi-metric on $X$ under two mild conditions on $K$. The first is that the affinity of each $x$ to itself
is infinite and that for $x\neq y$ the affinity is positive and finite. The second is a quantitative transitivity; if
the affinity between $x$ and $y$ is larger than $\lambda>0$ and the affinity of $y$ and $z$ is also larger than $\lambda$,
then the affinity between $x$ and $z$ is larger than $\nu(\lambda)$. The function $\nu$ is concave, increasing, continuous
from $\mathbb{R}^+$ onto  $\mathbb{R}^+$ with $\nu(\lambda)<\lambda$ for every $\lambda>0$.
\end{abstract}
\maketitle
\section{Introduction}
Isaac Newton in the seventeenth century started the endless quantitative approach to the understanding of nature. The quantitative character of the formulation of the Law of Universal Gravitation, should not hide its deep qualitative aspects. Now, more than 300 years later, we are able to explain the central fields as gradients of radial potentials centered at the ``object'' generating the field. Usually the potential take the form $V(x)=\varphi(\abs{x})$ where $\varphi$ is a decreasing profile and $\abs{x}$ is the distance of the point $x$, where the field is to be measured, to the origin  of coordinates supporting the mass or the charge that generates the field. In the Euclidean $n$-dimensional space the profile $\varphi(r)=r^{-n+2}$ gives the fundamental solution of the Laplacian. And the kernel $K(x,y)=\varphi(\abs{x-y})$ provides the basic information in order to produce the continuous models by convolution with the mass or charge densities that determine the system. These facts are also the starting points for harmonic analysis.

Our aim in this paper is to use arguments and results strongly related to the theory of uniform spaces, in order to give sufficient conditions on a kernel function $K(x,y)$ defined on an abstract setting, in such a way that $K(x,y)=\varphi(d(x,y))$ with $\varphi$ a decreasing profile and $d$ a (quasi)metric on $X$.

In other words, we aim to obtain an abstract form of Newtonian potentials for general kernels. Our approach in the search of conditions on the kernel $K(x,y)$ will be based in the heuristic associated to the interpretation of $K(x,y)$ as an affinity matrix for (big) data. Amazingly enough the abstract of the paper \cite{CoifmanPNAS2005} of the Coifman Group leading the harmonic analysis approach to determine the structure of big data sets, reads
\textit{``The process of iterating or diffusing the Markov matrix is seen as a generalization of some aspects of the Newtonian paradigm, in which local infinitesimal transitions of a system lead to global macroscopic by integration.''}
In some  sense the result of this paper shows that also the potential theoretic Newtonian view of nature is still close to these problems. By the way, our approach could be a good example of how qualitative aspects of a system leads to structural results of the model.

Let $X$ be a set (data set). Each element $x$ of $X$ is understood as a data point. Let $K(x,y)$ be a nonnegative number measuring the affinity bet\-ween the two data points $x$ and $y$. We shall consider some basic properties of affinity which will be sufficient to obtain the Newtonian  potential type structure for $K$.
\textbf{Symmetry}; affinity is a symmetric relation on $X\times X$ ($K(x,y)=K(y,x)$ for every $x$, $y\in X$).
\textbf{Positivity}; there is positive affinity between any couple of data points $x$ and $y$ ($K(x,y)\geq 0$ for every $x$, $y\in X$).
\textbf{Diagonal singularity}; the self affinity is unimprovable. Precisely, the affinity of each data point $x$ with itself is $+\infty$ ($K(x,x)=+\infty$ for every $x$) but for $x\neq y$ the affinity is finite ($K(x,y)<\infty$ for $x\neq y$).
\textbf{Quantitative Transiti\-vity}; if the affinity between the data points $x$ and $y$ is larger than $\lambda>0$ and the affinity between $y$ and $z$ is larger than $\lambda$ then the affinity between the points $x$ and $z$ is larger than $\nu(\lambda)<\lambda$. Here $\nu$ is a nonnegative, concave, increasing and continuous function defined on $\mathbb{R}^+$ onto  $\mathbb{R}^+$ such that $\nu(\lambda)<\lambda$.

A quasi-metric in $X$ is a nonnegative symmetric function $d$ defined on $X\times X$ which vanishes only on the diagonal $\Delta$ of $X\times X$ and satisfies a weak form of the triangle inequality, there exists a constant $\tau $ ($\geq 1$) such that the inequality
%\begin{equation*}
$d(x,z)\leq\tau(d(x,y)+d(y,z))$
%\end{equation*}
holds for every $x$, $y$ and $z$ in $X$.

The main result of this paper can be stated as follows.

\begin{theorem}\label{thm:maintheorem}
Let $X$ be a set. Let $K: X\times X\to \mathbb{R}^+$ be a symmetric function satisfying the singularity and the quantitative transitivity conditions. Then there exist a decreasing and continuous function $\varphi$ defined in $\mathbb{R}^+$ and a quasi-metric $d$ on $X$ such that
\begin{equation*}
K(x,y)=\varphi(d(x,y)).
\end{equation*}
Moreover, $d(x,y)=h(x,y)\rho(x,y)$ with $\rho$ a metric on $X$ and $h$ a symmetric function such that for some constants $0<c_1<c_2<\infty$ satisfies $c_1\leq h(x,y)\leq c_2$ for every $x$ and $y$ in $X$.
\end{theorem}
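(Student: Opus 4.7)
My plan is to treat $K$ as encoding a Hausdorff uniformity on $X$ through its superlevel sets and then to invoke Frink's classical metrization lemma to extract both the quasi-metric $d$ and its metric companion $\rho$. For each $\lambda>0$ I would set $U_\lambda := \{(x,y) \in X\times X : K(x,y) > \lambda\}$. The diagonal singularity gives $\Delta \subseteq U_\lambda$ and $\bigcap_{\lambda>0} U_\lambda = \Delta$; symmetry of $K$ makes each $U_\lambda$ symmetric; and the quantitative transitivity is exactly the composition rule $U_\lambda \circ U_\lambda \subseteq U_{\nu(\lambda)}$. Since $\nu:\mathbb{R}^+\to\mathbb{R}^+$ is continuous, increasing, onto, and strictly below the identity, the equation $\nu(L)=L$ has no positive solution, so iterating $\nu^{-1}$ forward and $\nu$ backward from a base value $\lambda_0>0$ produces a bi-infinite sequence $\{\lambda_n\}_{n\in\mathbb{Z}}$ with $\nu(\lambda_{n+1})=\lambda_n$, $\lambda_n\nearrow\infty$, and $\lambda_{-n}\searrow 0$. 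The family $\{U_{\lambda_n}\}$ is then a countable base for a Hausdorff uniformity satisfying $U_{\lambda_{n+1}}\circ U_{\lambda_{n+1}}\subseteq U_{\lambda_n}$.

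Next I would define $\varphi:(0,\infty)\to(0,\infty)$ to be the unique strictly decreasing continuous surjection satisfying $\varphi(2^{-n})=\lambda_n$ for every $n\in\mathbb{Z}$ (monotone interpolation between consecutive dyadic points), and set $d(x,y):=\varphi^{-1}(K(x,y))$ for $x\neq y$ and $d(x,x):=0$. The identity $K=\varphi\circ d$ is then immediate, with $\varphi$ decreasing and continuous on $\mathbb{R}^+$. The translation of the quantitative transitivity through $\varphi$ converts it into a quasi-ultrametric bound for $d$: if $d(x,y),d(y,z)\leq 2^{-n}$ then $K(x,y),K(y,z)\geq\lambda_n$, hence $K(x,z)\geq\nu(\lambda_n)=\lambda_{n-1}$, hence $d(x,z)\leq 2^{-n+1}=2\max(d(x,y),d(y,z))$. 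The standard bound $\max\leq\mathrm{sum}$ then gives $d(x,z)\leq 2(d(x,y)+d(y,z))$, so $d$ is a symmetric quasi-metric with constant $\tau\leq 2$.

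The remaining and most delicate step is to produce the metric $\rho$ together with the bounded factor $h$. I would apply Frink's lemma to the tower $\{U_{\lambda_n}\}$ above: it provides a pseudo-metric
\[
\rho(x,y) := \inf\Big\{ \sum_{i=1}^{k} 2^{-n_i} : x=x_0,x_1,\ldots,x_k=y,\ (x_{i-1},x_i)\in U_{\lambda_{n_i}} \Big\},
\]
together with the Frink sandwich $U_{\lambda_{n+1}}\subseteq\{\rho<2^{-n}\}\subseteq U_{\lambda_{n-1}}$. Combining this sandwich with the analogous dyadic description of $d$ from the previous paragraph yields universal constants $0<c_1<c_2<\infty$ such that $c_1\,\rho(x,y)\leq d(x,y)\leq c_2\,\rho(x,y)$ for every $x,y\in X$. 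In particular $\rho$ is nondegenerate off the diagonal, hence a genuine metric, and the symmetric function $h(x,y):=d(x,y)/\rho(x,y)$ (extended by $h(x,x):=1$) lies between $c_1$ and $c_2$.

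The hard part will be the lower bound in the Frink sandwich, that is, verifying that the chain-sum infimum defining $\rho$ does not collapse below the pointwise scale $2^{-n}$. This is the heart of Frink's combinatorial induction and is precisely where the concavity of $\nu$ is used, since it forces a geometric spacing of the levels $\lambda_n$ tight enough to prevent the pathological chain refinements that would otherwise drive such an infimum to zero, as in the snowflake example $K(x,y)=1/|x-y|^2$ on $\mathbb{R}$ where using $1/K$ itself as a distance fails to be comparable to any metric but the construction above recovers $d(x,y)\sim|x-y|$ with $\varphi(s)\sim s^{-2}$.
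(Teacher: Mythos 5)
Your setup up to the construction of the tower $\{U_{\lambda_n}\}$ and the verification that $d:=\varphi^{-1}(K)$ is a quasi-metric is essentially the same as the paper's, and it is sound (modulo that your constant is $\tau\leq 4$ rather than $\tau\leq 2$, since $2^{-n}$ only dominates $\max(d(x,y),d(y,z))$ up to a factor of $2$). But the final, and as you yourself say ``most delicate,'' step contains a genuine gap.

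The Frink sandwich you invoke, $U_{\lambda_{n+1}}\subseteq\{\rho<2^{-n}\}\subseteq U_{\lambda_{n-1}}$, requires the \emph{triple} composition $U_{\lambda_{n+1}}\circ U_{\lambda_{n+1}}\circ U_{\lambda_{n+1}}\subseteq U_{\lambda_n}$. Your tower only satisfies the \emph{double} composition $U_{\lambda_{n+1}}\circ U_{\lambda_{n+1}}\subseteq U_{\lambda_n}$; the triple only lands in $U_{\lambda_{n-1}}$. To apply Frink you must therefore pass to a thinned subsequence (say $W_m=U_{\lambda_{2m}}$), and then the pseudometric Frink produces lives on the dyadic scale $2^{-m}$, while your $d=\varphi^{-1}(K)$, with $\varphi(2^{-n})=\lambda_n$, lives on the scale $2^{-2m}$. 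The result is $d\sim\rho^{\beta}$ for some exponent $\beta>1$, not $d\sim\rho$; your claimed two-sided bound $c_1\rho\leq d\leq c_2\rho$ is simply false in general. This is precisely the phenomenon captured by the Mac\'ias--Segovia theorem that the paper invokes, and it is why the paper introduces the exponent $\beta$, sets $d=(\psi\circ K)^{1/\beta}$ rather than $d=\psi\circ K$, and correspondingly takes $\varphi(r)=\psi^{-1}(r^\beta)$. You need the analogous repair: rescale $\varphi$ (or take a $\beta$-th root of $d$) so that the two dyadic scales align before defining $h=d/\rho$.

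A second, smaller point: you attribute the success of the Frink lower bound to the concavity of $\nu$, but concavity plays no role at all in Frink's combinatorial chain argument, which works for any tower satisfying the triple-composition condition. In the paper, concavity of $\nu$ is used for something entirely different (Proposition~\ref{propo:inequalityPsi}): it guarantees that the piecewise-linear $\psi$ with $\psi(\lambda_k)=M^k$ satisfies $\psi(\nu(\lambda))\leq M\psi(\lambda)$ for \emph{all} $\lambda>0$, not merely at the grid points, which is what makes the family $\mathcal{V}(r)=\{K>\psi^{-1}(r)\}$ satisfy (S6) for every $r>0$. Your discrete approach, working only on the grid $\{\lambda_n\}$, does not actually invoke concavity anywhere --- which is a sign that you have discarded the piece of the hypothesis that the paper uses to make the profile $\varphi$ behave correctly between grid points.
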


The deepest results on the structure of quasi-metrics are due to Mac\'{\i}as and Segovia and are contained in \cite{MaSe79Lip}. See also \cite{AiIafNi98}. The most significant for our purposes is the fact that each quasi-metric is equivalent to a power of a metric. In other words, given a quasi-metric $d$ on $X$ with constant $\tau$, there exist $\beta\geq 1$ and a metric $\rho$ on $X$ such that for some positive constants $a_1$ and $a_2$ the inequalities
\begin{equation*}
a_1d(x,y)\leq\rho^{\beta}(x,y)\leq a_2d(x,y)
\end{equation*}
hold for every $x$ and $y$ in $X$. Actually the proof is based in Frink's lemma of metrization of uniform structures with a countable basis (\cite{Frink37}, \cite{Kelleybook75}).

The  rest of the paper is organized in the following way. Section~2 gives a characterization of quasi-metrics on a set in terms of properties of the family of stripes in $X\times X$ induced by the quasi-metric. Section~3 contains the construction of the monotonic profile. In Section~4 we prove the main result.

\section{Quasi-metrics and families of stripes around the diagonal}
Let $X$ be a set.
The composition of two subsets $U$ and $V$ of $X\times X$ is given by $
V\circ U=\{(x,z)\in X\times X: \textrm{ there exists }  y\in X \textrm{ such that } (x,y)\in U \textrm{ and } (y,z)\in V\}$.
A subset $U$ in $X\times X$ is said to be symmetric if $(x,y)\in U$ if and only if $(y,x)\in U$. Set $\Delta$ to
denote the diagonal in $X\times X$, i.e. $\Delta=\set{(x,x): x\in X}$. When a quasi-metric $\delta$ with constant $\tau$ is given
in $X$, it is easy to check that the one parameter family $\mathcal{V}(r)=\{(x,y)\in X\times X: \delta(x,y)<r\}$; $r>0$, of stripes around
 the diagonal of $X\times X$, satisfies
\begin{enumerate}[(S1)]
\item  each $\mathcal{V}(r)$ is symmetric;
\item $\Delta\subseteq\mathcal{V}(r)$, for every $r>0$;
\item $\mathcal{V}(r_1)\subseteq\mathcal{V}(r_2)$, for $0<r_1\leq r_2$;
\item $\cup_{r>0}\mathcal{V}(r)=X\times X$;
\item $\cap_{r>0}\mathcal{V}(r)\subseteq\Delta$;
\item there exists $T\geq 1$ such that $\mathcal{V}(r)\circ\mathcal{V}(r)\subseteq\mathcal{V}(Tr)$, for every $r>0$.
\end{enumerate}
Actually, the constant $T$ in (S6) can be taken to be the triangle constant $\tau$ of $\delta$. Set $\mathcal{P}(X\times X)$
to denote the set of subsets of $X\times X$.
\begin{theorem}\label{thm:Vdeltaequivalence}
Let $\mathcal{V}:\mathbb{R}^+\to \mathcal{P}(X\times X)$ be a one parameter family of the subsets of $X\times X$ that satisfies
\textit{(S1)} to \textit{(S6)} above. Then the function $\delta$ defined on $X\times X$ by
%\begin{equation*}
$\delta(x,y)=\inf\{r>0: (x,y)\in\mathcal{V}(r)\}$
%\end{equation*}
is a quasi-metric on $X$ with $\tau\leq T$. Moreover, for each $\gamma>0$, we have
\begin{equation}\label{eq:Vdeltaequivalence}
\mathcal{V}_\delta(r)\subseteq\mathcal{V}(r)\subseteq\mathcal{V}_\delta((1+\gamma)r)
\end{equation}
hold for every $r>0$, where $\mathcal{V}_\delta(s)=\{(x,y)\in X\times X: \delta(x,y)<s\}$.
\end{theorem}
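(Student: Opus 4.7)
The plan is to verify the four quasi-metric axioms for $\delta$ one at a time, then establish the two inclusions. Everything should fall out of the axioms (S1)--(S6); the one place I need to be careful is matching strict versus non-strict inequalities when I pass between the infimum defining $\delta$ and the open sublevel sets $\mathcal{V}_{\delta}(r)$.

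First I would check that $\delta$ is well-defined as a map $X\times X\to[0,\infty)$. Finiteness of the infimum follows from (S4) (every pair lies in some $\mathcal{V}(r)$), and nonnegativity is automatic. Symmetry of $\delta$ is immediate from (S1): the set $\{r>0:(x,y)\in\mathcal{V}(r)\}$ coincides with its $(y,x)$-analogue. For the identification of the zero set, (S2) gives $\delta(x,x)=0$ for every $x$. Conversely, if $\delta(x,y)=0$, then for each $r>0$ one can find $r'<r$ with $(x,y)\in\mathcal{V}(r')$, so by (S3) $(x,y)\in\mathcal{V}(r)$ for every $r>0$, and (S5) forces $x=y$.

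The main computation is the weak triangle inequality with constant $T$. Given $\varepsilon>0$ pick $r_1,r_2>0$ with $r_i<\delta$-value$+\varepsilon$ (using the infimum characterization) so that $(x,y)\in\mathcal{V}(r_1)$ and $(y,z)\in\mathcal{V}(r_2)$. Set $r=\max\{r_1,r_2\}\le r_1+r_2$; by (S3) both pairs lie in $\mathcal{V}(r)$, so by the definition of composition and (S6),
\begin{equation*}
(x,z)\in \mathcal{V}(r)\circ \mathcal{V}(r)\subseteq \mathcal{V}(Tr).
\end{equation*}
Hence $\delta(x,z)\le Tr\le T(r_1+r_2)<T\bigl(\delta(x,y)+\delta(y,z)+2\varepsilon\bigr)$. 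Letting $\varepsilon\downarrow 0$ yields the claim with $\tau\le T$. This is the only place one has to juggle several axioms at once, so it is the ``hard'' step, although really it is just a standard infimum-approximation argument.

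Finally, for the two inclusions in \eqref{eq:Vdeltaequivalence}: if $(x,y)\in \mathcal{V}_{\delta}(r)$, i.e.\ $\delta(x,y)<r$, then there exists $r'\in[\delta(x,y),r)$ with $(x,y)\in\mathcal{V}(r')$, and (S3) gives $(x,y)\in\mathcal{V}(r)$. For the other inclusion, $(x,y)\in\mathcal{V}(r)$ means $r$ belongs to the set over which the infimum is taken, so $\delta(x,y)\le r<(1+\gamma)r$, whence $(x,y)\in\mathcal{V}_{\delta}((1+\gamma)r)$. Note the factor $(1+\gamma)$ is forced by the mismatch between the non-strict inequality $\delta(x,y)\le r$ one gets from the infimum and the strict inequality defining $\mathcal{V}_{\delta}$.
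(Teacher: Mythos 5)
Your proof is correct and follows essentially the same route as the paper's: verify the quasi-metric axioms in turn using (S1)--(S5), establish the triangle inequality via the infimum-approximation argument combined with (S6) applied to $r^*=\max\{r_1,r_2\}$, and obtain the two inclusions directly from (S3) and the infimum definition of $\delta$. The only cosmetic difference is that you spell out the short argument for $\mathcal{V}_\delta(r)\subseteq\mathcal{V}(r)$, which the paper leaves as a one-line appeal to (S3).
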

\begin{proof}
From \textit{(S4)} for the family $\mathcal{V}$ we see that $\delta(x,y)$ is well defined
as a nonnegative real number. The symmetry of $\delta$ follows from \textit{(S1)}. The fact that
$\delta$ vanishes on the diagonal $\Delta$ follows from $\Delta\subseteq\cap_{r>0}\mathcal{V}(r)$
which is contained in \textit{(S2)}. Now, if $(x,y)\in X\times X$ and $\delta(x,y)=0$, then, from \textit{(S3)}
for each $r>0$, $(x,y)\in\mathcal{V}(r)$. Now, from \textit{(S5)} we necessarily have that $(x,y)\in\Delta$ or,
in other words $x=y$. Let us check that $\delta$ satisfies a triangle inequality. Let $x$, $y$ and $z$ be three
points in $X$. Let $\varepsilon>0$. Take $r_1>0$ and $r_2>0$ such that $r_1<\delta(x,y)+\varepsilon$, $r_2<\delta(y,z)+\varepsilon$,
$(x,y)\in\mathcal{V}(r_1)$ and $(y,z)\in\mathcal{V}(r_2)$. From \textit{(S6)} with $r^*=\sup\{r_1,r_2\}$, we have
$(x,z)\in\mathcal{V}(r_2)\circ\mathcal{V}(r_1)\subseteq\mathcal{V}(r^*)\circ\mathcal{V}(r^*)\subseteq\mathcal{V}(Tr^*)$. Hence
$\delta(x,z)\leq Tr^*\leq T(r_1+r_2)\leq T(\delta(x,y)+\delta(y,z))+2\varepsilon T$ and we get the triangle inequality with $\tau=T$.
Notice first that from \textit{(S3)}, $\mathcal{V}_\delta(r)\subseteq\mathcal{V}(r)$ for every $r>0$. Take now $(x,y)\in \mathcal{V}(r)$,
then $\delta(x,y)\leq r<(1+\gamma)r$, so that $\mathcal{V}(r)\subseteq\mathcal{V}_\delta((1+\gamma)r)$ for every $\gamma>0$ and every $r>0$.
\end{proof}

The next result follows from the above and the metrization theorem of quasi-metric spaces proved in \cite{MaSe79Lip} as an application
of Frink's Lemma on metrizability of uniform spaces with countable bases.

\begin{theorem}\label{thm:stripesequivalent}
Let $\mathcal{V}$ and $\delta$ be as in Theorem~\ref{thm:Vdeltaequivalence}. Then, there exist a constant $\beta\geq 1$ and a metric $\rho$ on $X$ such
that
\begin{enumerate}[(i)]
\item  $4^{-\beta}\delta\leq\rho^\beta\leq 2^\beta\delta$;
\item $\mathcal{V}_{\rho^\beta}\left(\tfrac{r}{4^\beta}\right)\subseteq\mathcal{V}(r)\subseteq\mathcal{V}_{\rho^\beta}(2^{\beta +1}r)$ where
$\mathcal{V}_{\rho^\beta}(r)=\{(x,y)\in X\times X:\rho^\beta(x,y)<r\}$.
\end{enumerate}
\end{theorem}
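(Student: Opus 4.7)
The plan is to combine Theorem~\ref{thm:Vdeltaequivalence} with the quasi-metric metrization theorem of Mac\'{\i}as and Segovia \cite{MaSe79Lip} quoted in the Introduction. By Theorem~\ref{thm:Vdeltaequivalence}, the function $\delta$ is already a quasi-metric on $X$ with triangle constant $\tau\leq T$, and moreover its stripes $\mathcal{V}_\delta(r)$ sandwich the original family $\mathcal{V}(r)$ via (\ref{eq:Vdeltaequivalence}). Thus the whole task reduces to a purely metric-theoretic statement about the quasi-metric $\delta$, after which translating the information into the language of stripes is routine.

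For part (i), I would apply the Mac\'{\i}as--Segovia theorem directly to $\delta$. The construction (via Frink's Lemma on uniform spaces with countable bases) selects an exponent $\beta\geq 1$ depending only on the triangle constant (essentially chosen so that $(2\tau)^{1/\beta}\leq 2$) and defines a candidate metric through the chain infimum
\begin{equation*}
\rho(x,y)=\inf\Bigl\{\sum_{i=0}^{n-1}\delta^{1/\beta}(x_i,x_{i+1}) : x=x_0,x_1,\dots,x_n=y\Bigr\}.
\end{equation*}
The standard Frink estimate, one-sided by construction and the other side by an induction on the length of chains, delivers precisely the bounds $4^{-\beta}\delta\leq \rho^\beta\leq 2^\beta\delta$ claimed in (i). Since this is exactly the content of the theorem cited in \cite{MaSe79Lip} (see also \cite{AiIafNi98}), no additional work is needed here; the quantitative constants $4^{-\beta}$ and $2^{\beta}$ are the explicit form of the constants $a_1$ and $a_2$ of the Introduction.

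Once (i) is in hand, part (ii) is a bookkeeping exercise combining (i) with the stripe equivalence (\ref{eq:Vdeltaequivalence}). For the first inclusion, if $\rho^\beta(x,y)<r/4^\beta$, then by the left inequality in (i) we have $\delta(x,y)\leq 4^\beta\rho^\beta(x,y)<r$, so $(x,y)\in\mathcal{V}_\delta(r)\subseteq\mathcal{V}(r)$ by (\ref{eq:Vdeltaequivalence}). For the second inclusion, apply (\ref{eq:Vdeltaequivalence}) with $\gamma=1$ to obtain $\mathcal{V}(r)\subseteq\mathcal{V}_\delta(2r)$; if $\delta(x,y)<2r$, the right inequality in (i) yields $\rho^\beta(x,y)\leq 2^\beta\delta(x,y)<2^{\beta+1}r$, so $(x,y)\in\mathcal{V}_{\rho^\beta}(2^{\beta+1}r)$.

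The main obstacle here is really part (i), which is the heart of the Mac\'{\i}as--Segovia theorem and itself rests on Frink's metrization lemma; since the paper explicitly invokes that result from the literature, the ``proof'' of Theorem~\ref{thm:stripesequivalent} amounts to the reduction via Theorem~\ref{thm:Vdeltaequivalence} plus a quotation and the elementary translation to stripe inclusions described above.
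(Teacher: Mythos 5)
Your proof is correct and follows essentially the same route as the paper: both reduce to the Mac\'{\i}as--Segovia metrization theorem (via Frink's lemma) for part~(i), and both obtain part~(ii) by combining the resulting power-law comparison $4^{-\beta}\delta\leq\rho^\beta\leq 2^\beta\delta$ with the stripe equivalence \eqref{eq:Vdeltaequivalence} taken at $\gamma=1$. The only small discrepancy is cosmetic: the paper fixes $\alpha=1/\beta$ by $(3T^2)^\alpha=2$ (the constant $3T^2$ coming from the three-fold composition needed in Frink's lemma), whereas you write ``essentially $(2\tau)^{1/\beta}\leq 2$,'' but since you are quoting the cited theorem rather than reproving it, this does not affect the argument.
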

\begin{proof}
Following the proof of Theorem~2 on page 261 in \cite{MaSe79Lip} take $\alpha<1$ such that $(3T^2)^\alpha=2$ and $\beta=\tfrac{1}{\alpha}>1$.
With $\rho$ the metric provided by the metrization theorem for uniform spaces with countable bases, we have that
$4\rho(x,y)\geq\delta(x,y)^{\alpha}\geq\tfrac{1}{2}\rho(x,y)$. So that
\begin{equation*}
\frac{1}{4^\beta}\delta(x,y)\leq\rho(x,y)^\beta\leq 2^\beta\delta(x,y),
\end{equation*}
and \textit{(ii)} follow from these inequalities and \eqref{eq:Vdeltaequivalence} with $\gamma=1$.
\end{proof}

\section{Building the basic profile shape}
The classical inverse proportionality to the square of the distance between the two bodies for the gravitation field,
translates into inverse proportionality  to the distance for the potential. That is $\varphi(r)=\tfrac{1}{r}$ for the
gravitational potential.

This section  is devoted to the construction of the basic shapes of the profiles that allow the Newtonian representation
of the kernels. This construction requires to solve a functional inequality involving the function $\nu$ that controls the
quantitative transitive property of $K$.

\begin{proposition}\label{propo:inequalityPsi}
Let $\nu$ be a concave, continuous, nonnegative and increasing function defined on $\mathbb{R}^+$ onto $\mathbb{R}^+$
such that $\nu(\lambda)<\lambda$ for every $\lambda>0$. Then, given $M>1$, there exists a continuous, decreasing
and convex function $\psi$ defined on $\mathbb{R}^+$ with $\psi(1)=1$ such that the inequality
\begin{equation}\label{eq:inequalityPsi}
\psi(\nu(\lambda))\leq M\psi(\lambda)
\end{equation}
holds for every $\lambda>0$.
\end{proposition}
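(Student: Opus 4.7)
The natural strategy is to \emph{prescribe} $\psi$ on an orbit of $\nu$ so that \eqref{eq:inequalityPsi} becomes an equality there, and then interpolate linearly between consecutive orbit points, checking afterwards that convexity and the inequality survive the extension.

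\textbf{Setting up the orbit.} I would first observe that $\nu$ is a continuous bijection of $\mathbb{R}^+$ onto itself, so $\nu^{-1}$ exists and is continuous; moreover the onto hypothesis together with monotonicity forces $\nu(\lambda)\to 0$ as $\lambda\to 0^+$. Define $\lambda_n=\nu^n(1)$ for $n\in\mathbb{Z}$ (negative iterates via $\nu^{-1}$). Since $\nu(\lambda)<\lambda$ the sequence $(\lambda_n)$ is strictly decreasing in $n$; a fixed-point argument applied to its limit gives $\lambda_n\to 0$ as $n\to +\infty$ and $\lambda_n\to +\infty$ as $n\to -\infty$. Set $\psi(\lambda_n)=M^n$ and extend $\psi$ by linear interpolation on each $[\lambda_{n+1},\lambda_n]$. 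By construction $\psi(1)=1$, $\psi$ is continuous, strictly decreasing, $\psi(0^+)=+\infty$, $\psi(+\infty)=0$, and $\psi(\nu(\lambda_n))=M\,\psi(\lambda_n)$ so \eqref{eq:inequalityPsi} holds at every $\lambda_n$.

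\textbf{Convexity.} Let $s_n$ denote the slope of $\psi$ on $[\lambda_{n+1},\lambda_n]$; convexity is equivalent to $s_n\le s_{n-1}$ for every $n$, and a direct computation reduces this to
\[
\frac{\lambda_n-\lambda_{n+1}}{\lambda_{n-1}-\lambda_n}\;\le\;M.
\]
Here I would use that $\nu$ concave with $\nu(0^+)=0$ and $\nu(\lambda)<\lambda$ forces $\nu'\le 1$ everywhere (otherwise, by concavity, $\nu$ would sit above the diagonal near $0$). Consequently $\lambda\mapsto\lambda-\nu(\lambda)$ is non-decreasing, so the ratio above is at most $1<M$. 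Hence the piecewise-linear $\psi$ is convex.

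\textbf{The inequality on intermediate points.} Take $\lambda\in(\lambda_{n+1},\lambda_n)$ and write $\lambda=(1-t)\lambda_n+t\lambda_{n+1}$ with $t\in(0,1)$. Since $\nu$ is increasing, $\nu(\lambda)\in(\lambda_{n+2},\lambda_{n+1})$, so $\nu(\lambda)=(1-s)\lambda_{n+1}+s\lambda_{n+2}$ for some $s\in(0,1)$. By linearity of $\psi$ on the relevant pieces,
\[
\psi(\lambda)=M^n\bigl(1+t(M-1)\bigr),\qquad \psi(\nu(\lambda))=M^{n+1}\bigl(1+s(M-1)\bigr),
\]
so \eqref{eq:inequalityPsi} reduces to the purely geometric statement $s\le t$. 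This is exactly the content of concavity of $\nu$ applied to the secant through $(\lambda_{n+1},\lambda_{n+2})$ and $(\lambda_n,\lambda_{n+1})$: concavity gives $\nu(\lambda)\ge$ that secant, which after rearrangement is $s\le t$.

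\textbf{Main obstacle.} Everything hinges on the interplay between the concavity of $\nu$ and the behaviour of the geometric orbit $(\lambda_n)$: the convexity of $\psi$ comes from the monotonicity of $\lambda-\nu(\lambda)$ (which itself needs the mild but non-trivial remark $\nu'\le 1$), while the pointwise inequality off the orbit comes from the secant-line inequality for concave functions. Once these two concavity consequences are in place, the construction is essentially forced.
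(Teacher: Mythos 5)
Your proof is correct and follows essentially the same route as the paper: define $\lambda_n=\nu^n(1)$, set $\psi(\lambda_n)=M^n$, interpolate linearly, and use concavity of $\nu$ (in the form of the secant-slope inequality, equivalently your $s\le t$) to extend the inequality from the nodes to the whole interval. The only genuine addition is that you supply an explicit convexity argument via monotonicity of $\lambda-\nu(\lambda)$, whereas the paper simply asserts that $\psi$ is convex.
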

\begin{proof}
Set $\lambda_0=1$, $\lambda_1=\nu(1)$ and $\lambda_{-1}=\nu^{-1}(1)$. Notice that $\lambda_1<1$ and $\lambda_{-1}>1$.
In fact, $\lambda_1=\nu(1)<1$ and $1=\nu^{-1}(\nu(1))=\nu^{-1}(\lambda_1)<\nu^{-1}(1)=\lambda_{-1}$. Set for $k\in \mathbb{N}$,
$\lambda_k=\nu(\lambda_{k-1})$ and $\lambda_{-k}=\nu^{-1}(\lambda_{-k+1})$. Notice that $\lambda_k$ decreases as $k\to\infty$ and
increases when $k\to-\infty$. The continuity of $\nu$ and the property $\nu(\lambda)<\lambda$ for every positive $\lambda$ imply
that $\lambda_k\to 0$ as $k\to\infty$ and $\lambda_k\to\infty$ as $k\to-\infty$ monotonically. This basic sequence $\{\lambda_k:k\in \mathbb{Z}\}$
allows to construct a function $\psi$ in the following way. Set $\psi(\lambda_k)=M^k$, $k\in \mathbb{Z}$ and for $\lambda\in [\lambda_{k+1},\lambda_k]$
define $\psi(\lambda)$ by linear interpolation. It is clear that $\psi$ is continuous, decreasing, $\psi(0^+)=+\infty$, $\psi(\infty)=0$,
$\psi(1)=\psi(\lambda_0)=M^0=1$ and that $\psi$ is convex. We only have to check that $\psi$ solves inequality \eqref{eq:inequalityPsi}. On the
sequence $\{\lambda_k: k\in \mathbb{Z}\}$, \eqref{eq:inequalityPsi} becomes an equality. In fact,
$\psi(\nu(\lambda_k))=\psi(\lambda_{k+1})=M^{k+1}=MM^k=M\psi(\lambda_k)$.

Let us now take $\lambda\in (\lambda_{k+1},\lambda_k)$ for $k\in \mathbb{Z}$. For such $\lambda$, $\psi(\lambda)$ satisfies
\begin{equation}\label{eq:trianglesemejant}
\frac{M^{k+1}-M^k}{\lambda_k-\lambda_{k+1}}=\frac{M^{k+1}-\psi(\lambda)}{\lambda-\lambda_{k+1}}.
\end{equation}
On the other hand, since $\lambda_{k+1}<\lambda<\lambda_k$, we have that $\lambda_{k+2}=\nu(\lambda_{k+1})<\nu(\lambda)<\nu(\lambda_k)=\lambda_{k+1}$.
Hence $\psi(\nu(\lambda))$ satisfies
\begin{equation}\label{eq:trianglesemejantwithnu}
\frac{M^{k+2}-M^{k+1}}{\lambda_{k+1}-\lambda_{k+2}}=\frac{M^{k+2}-\psi(\nu(\lambda))}{\nu(\lambda)-\lambda_{k+2}}.
\end{equation}
From \eqref{eq:trianglesemejant} and \eqref{eq:trianglesemejantwithnu} we get
\begin{equation*}
\psi(\lambda)=M^{k+1}-M^{k}(M-1)\frac{\lambda-\lambda_{k+1}}{\lambda_k-\lambda_{k+1}}
\end{equation*}
and
\begin{equation*}
\psi(\nu(\lambda))=M\left(M^{k+1}-M^{k}(M-1)\frac{\nu(\lambda)-\lambda_{k+2}}{\lambda_{k+1}-\lambda_{k+2}}\right).
\end{equation*}
Now, since $\nu$ is concave, we have for $\lambda_{k+1}<\lambda<\lambda_k$ that
\begin{equation*}
\frac{\nu(\lambda)-\nu(\lambda_{k+1})}{\lambda- \lambda_{k+1}}\geq\frac{\nu(\lambda_k)-\nu(\lambda_{k+1})}{\lambda_k-\lambda_{k+1}},
\end{equation*}
hence, $\psi(\nu(\lambda))\leq M\psi(\lambda)$.
\end{proof}

The basic shapes for the profiles $\varphi$ in our main result will be given as composition of the inverse $\eta$ of $\psi$ with power laws.

\section{Proof of the main result}
Let us start by rewriting, formally, the properties of symmetry, positivity, singularity and transitivity of a data affinity kernel $K(x,y)$
 defined on the set $X\times X$. Let $K: X\times X\to \mathbb{R}$ such that
\begin{enumerate}[(K1)]
\item $K(x,y)=K(y,x)$, for every $x$ and $y$ in $X$;
\item $K(x,y)>0$, for every $x$ and $y$ in $X$;
\item $K(x,y)=+\infty$ if and only if $x=y$;
\item there exists a continuous, concave, increasing and nonnegative function $\nu$ defined on $\mathbb{R}^+$ onto $\mathbb{R}^+$, with
$\nu(\lambda)<\lambda$, $\lambda>0$, such that $K(x,z)>\nu(\lambda)$ whenever there exists $y\in X$ with $K(x,y)>\lambda$ and $K(y,z)>\lambda$,
for every $\lambda >0$.
\end{enumerate}
With these properties, Theorem~\ref{thm:maintheorem} can be restated as follows.
\begin{theorem}
Let $X$ be a set. Let $K$ be a kernel on $X\times X$ satisfying properties\textit{(K1)} to \textit{(K4)}.
Then, there exist a metric $\rho$ on $X$, a real number $\beta\geq 1$, a function $h(x,y)$ defined on $X\times X$ with  $2^{-1/\beta}\leq h(x,y)\leq 4$ and a function $\varphi:\mathbb{R}^+\to\mathbb{R}^+$ continuous, decreasing with $\varphi(0^+)=+\infty$ and $\varphi(\infty)=0$, such that
\begin{equation*}
K(x,y)=\varphi(h(x,y)\rho(x,y)).
\end{equation*}
\end{theorem}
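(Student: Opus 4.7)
The plan is to bridge the affinity kernel $K$ to the uniform-space machinery of Section~2 through the profile $\psi$ built in Section~3. First, apply Proposition~\ref{propo:inequalityPsi} with a fixed $M>1$ (e.g.\ $M=2$) to obtain a continuous, decreasing, convex $\psi:\mathbb{R}^+\to\mathbb{R}^+$ with $\psi(0^+)=+\infty$, $\psi(\infty)=0$, $\psi(1)=1$, and $\psi(\nu(\lambda))\leq M\psi(\lambda)$ for all $\lambda>0$. Let $\eta=\psi^{-1}$, also continuous and strictly decreasing.

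Next, build the one-parameter family of stripes adapted to $K$ by setting
\begin{equation*}
\mathcal{V}(r)=\{(x,y)\in X\times X: K(x,y)>\eta(r)\}, \qquad r>0.
\end{equation*}
Verify \textit{(S1)}--\textit{(S6)} in order: \textit{(S1)} from \textit{(K1)}; \textit{(S2)} from \textit{(K3)} (since $K(x,x)=+\infty>\eta(r)$); \textit{(S3)} from the monotonicity of $\eta$; \textit{(S4)} from \textit{(K2)} together with $\eta(\infty)=0$; and \textit{(S5)} from $\eta(0^+)=+\infty$ combined with \textit{(K3)}. The only substantive step is \textit{(S6)}: if $K(x,y)>\eta(r)$ and $K(y,z)>\eta(r)$, then \textit{(K4)} gives $K(x,z)>\nu(\eta(r))$, and the functional inequality $\psi(\nu(\eta(r)))\leq M\psi(\eta(r))=Mr$ yields $\nu(\eta(r))\geq\eta(Mr)$, hence $(x,z)\in\mathcal{V}(Mr)$. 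Thus \textit{(S6)} holds with $T=M$.

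By Theorem~\ref{thm:Vdeltaequivalence} the function $\delta(x,y)=\inf\{r>0:(x,y)\in\mathcal{V}(r)\}$ is a quasi-metric on $X$; a short calculation using continuity of $\eta$ identifies $\delta(x,y)=\psi(K(x,y))$, so that $K(x,y)=\eta(\delta(x,y))$. Now invoke Theorem~\ref{thm:stripesequivalent} to get $\beta\geq 1$ and a metric $\rho$ with $4^{-\beta}\delta\leq\rho^\beta\leq 2^\beta\delta$. Define
\begin{equation*}
\varphi(t)=\eta(t^\beta),\qquad h(x,y)=\frac{\delta(x,y)^{1/\beta}}{\rho(x,y)} \text{ for } x\neq y,
\end{equation*}
(with $h(x,x)$ set to any value in the prescribed range). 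Then $\varphi:\mathbb{R}^+\to\mathbb{R}^+$ is continuous and decreasing with $\varphi(0^+)=+\infty$ and $\varphi(\infty)=0$, the two-sided comparison of $\rho^\beta$ and $\delta$ forces $h$ to lie in the bounded range claimed (the upper bound $4$ and the lower bound of the form $2^{-1/\beta}$ being read off from the constants in Theorem~\ref{thm:stripesequivalent}), and
\begin{equation*}
K(x,y)=\eta(\delta(x,y))=\eta\bigl((h(x,y)\rho(x,y))^\beta\bigr)=\varphi\bigl(h(x,y)\rho(x,y)\bigr).
\end{equation*}

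The hard part is exactly the verification of \textit{(S6)}: it is precisely where the quantitative transitivity hypothesis \textit{(K4)} must be converted into a bounded-distortion composition rule for the stripes, and this conversion is tailor-made for the functional inequality \eqref{eq:inequalityPsi} constructed in Proposition~\ref{propo:inequalityPsi}. Everything afterwards is essentially the bookkeeping of plugging Theorems~\ref{thm:Vdeltaequivalence} and~\ref{thm:stripesequivalent} into this stripe family and re-expressing the resulting quasi-metric as a bounded perturbation of a power of a metric.
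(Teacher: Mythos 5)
Your proposal follows the paper's own route step by step: apply Proposition~\ref{propo:inequalityPsi} with $M=2$, build the stripe family $\mathcal{V}(r)=\{K>\eta(r)\}$, verify \textit{(S1)}--\textit{(S6)} (the \textit{(S6)} argument is exactly the paper's), and feed this into Theorems~\ref{thm:Vdeltaequivalence} and~\ref{thm:stripesequivalent}. The one place where you streamline is the final bound on $h$: you observe explicitly that $\delta=\psi\circ K$ (a point the paper leaves implicit) and then read off $\tfrac14(\psi\circ K)^{1/\beta}\leq\rho\leq 2(\psi\circ K)^{1/\beta}$ from the pointwise inequality $4^{-\beta}\delta\leq\rho^\beta\leq 2^\beta\delta$ in Theorem~\ref{thm:stripesequivalent}\textit{(i)}, whereas the paper instead runs a dyadic argument on the set inclusions from part \textit{(ii)}. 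Note a small mismatch: the pointwise inequality yields $h\geq 1/2$, not the tighter $h\geq 2^{-1/\beta}$ quoted in the theorem (for $\beta\geq 1$, $2^{-1/\beta}\geq 1/2$); the paper's own derivation of $2^{-1/\beta}$ rests on the inclusion \eqref{eq:inclusionsforrho}, whose inequality sign appears inverted relative to what Theorem~\ref{thm:stripesequivalent}\textit{(ii)} actually gives. Either way $h$ is bounded above and below by absolute constants, which is all that is needed, so this affects only the cosmetic value of the lower bound. Everything else in your argument is correct.
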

\begin{proof}
Let $\nu$ be the function provided by (K4). Let  $\psi$ be given by Proposition~\ref{propo:inequalityPsi} with this function
$\nu$, and $M=2$. Hence $\psi(\nu(\lambda))\leq 2\psi(\lambda)$ for every $\lambda>0$. Take $\eta=\psi^{-1}$ and
$\mathcal{V}: \mathbb{R}^+\to\mathcal{P}(X\times X)$ given by
\begin{equation*}
\mathcal{V}(r)=E_{\eta(r)}=\{(x,y): K(x,y)>\eta(r)\}.
\end{equation*}
Let is check that $\mathcal{V}$ satisfies properties (S1) to (S6) in Section~2 with  constant $T=2$ ($=M$). From (K1) we see
that each $E_{\lambda}$ is symmetric, in particular $\mathcal{V}(r)$ is symmetric for every $r>0$. Since $K(x,x)=+\infty$, from
(K3), we have that $\Delta\subseteq E_{\eta(r)}=\mathcal{V}(r)$ for $r>0$. In order to check (S3) take $0<r_1<r_2<\infty$.
Hence $\eta(r_1)>\eta(r_2)$, so that $K(x,y)>\eta(r_1)$ implies  $K(x,y)>\eta(r_2)$. Or, in other words $E_{\eta(r_1)}\subset E_{\eta(r_2)}$.
Or $\mathcal{V}(r_1)\subseteq\mathcal{V}(r_2)$. The positivity (K2) of $K(x,y)$ shows (S4). Property (S5) of $\mathcal{V}$ follows
from (S3). To prove (S6) for $\mathcal{V}$, take $r>0$. If $(x,z)\in\mathcal{V}(r)\circ\mathcal{V}(r)=E_{\eta(r)}\circ E_{\eta(r)}$, then
there exists $y\in X$ such that $K(x,y)>\eta(r)$ and $K(y,z)>\eta(r)$. From (K4), $K(x,z)>\nu(\eta(r))$. Now applying \eqref{eq:inequalityPsi}
with $\lambda=\psi^{-1}(r)$ we get $K(x,z)>\nu(\psi^{-1}(r))\geq\eta(2r)$, or
$(x,z)\in E_{\eta(2r)}=\mathcal{V}(2r)$. Hence (S6) for $\mathcal{V}$ holds with $T=2$. We can, then apply the results of Section~2. First
to produce a quasi-metric $\delta$ as in Theo\-rem~\ref{thm:Vdeltaequivalence} and then to provide the metric $\rho$ and the exponent $\beta$
given in Theo\-rem~\ref{thm:stripesequivalent}. Thus, for every $r>0$, $\mathcal{V}_{\rho^\beta}\left(\tfrac{r}{4^\beta}\right)\subseteq\mathcal{V}(r)=E_{\eta(r)}\subseteq\mathcal{V}_{\rho^\beta}(2^{\beta +1}r)$,
where $\rho$ is a metric in $X$ and, since $T$ can be taken to be equal to 2, $\beta=\log_2 12$ works. The above inclusiones, taking
$s=r^{1/\beta}$, are equivalent to
\begin{equation}\label{eq:inclusionsforrho}
\left\{\rho<\tfrac{s}{4}\right\}\subseteq \left\{(\psi\circ K)^{1/\beta}>s\right\}\subseteq\left\{\rho<2^{1+1/\beta}s\right\}
\end{equation}
for every $s>0$. Let $x$ and $y$ be two different points in $X$. Since $0<K(x,y)<\infty$ so is $(\psi\circ K)^{1/\beta}(x,y)$.
There exists, then, a unique $j\in \mathbb{Z}$ ($j=j(x,y)$) such that $2^j\leq(\psi\circ K)^{1/\beta}(x,y)<2^{j+1}$.
By the second inclusion in \eqref{eq:inclusionsforrho} we see that
$\rho(x,y)<2^{1/\beta}2^{j}\leq 2^{1/\beta}(\psi\circ K)^{1/\beta}(x,y)$. On the other hand, since $(\psi\circ K)^{1/\beta}(x,y)<2^{j+1}$, from
 the first inclusion in \eqref{eq:inclusionsforrho} we necessarily have that $\rho(x,y)\geq\tfrac{2^{j+1}}{4}>\tfrac{1}{4}(\psi\circ K)^{1/\beta}(x,y)$. Hence for $x\neq y$ we have
\begin{equation*}
\frac{1}{4}(\psi\circ K)^{1/\beta}\leq\rho\leq 2^{1/\beta}(\psi\circ K)^{1/\beta}.
\end{equation*}
Set $h(x,y)=\tfrac{(\psi\circ K)^{1/\beta}}{\rho(x,y)}$ for $x\neq y$ and $h(x,x)=1$. Then $\tfrac{1}{2^{1/\beta}}\leq h\leq 4$ and
$K(x,y)=\psi^{-1}((h(x,y)\rho(x,y))^\beta)=\varphi(h(x,y)\rho(x,y))$ with $\varphi(r)=\psi^{-1}(r^\beta)=\eta(r^\beta)$.
\end{proof}

Notice that since $h$ is symmetric and bounded above and below by posi\-tive constants the function $d(x,y)=h(x,y)\rho(x,y)$ is
a quasi-metric. But actually $d$ is better than a general quasi-metric since its triangular inequality constant can be taken
to be independent of the length of chains. Precisely, $d(x_1,x_m)\leq 2^{2+1/\beta}\sum_{j=1}^{m-1}d(x_j,x_{j+1})$.

Let us observe also that Newtonian type power laws are obtained when $\nu(\lambda)=a\lambda$ for $a<1$. In fact, with
$m=\tfrac{1}{\log_2 a}<0$, $\psi(r)=r^m$ solves the equation $(a\lambda)^m=2\lambda^m$. Hence $\varphi$ becomes also a
power law.

%\newpage
%\bibliographystyle{plain}
%\bibliography{ref}

%%%%%%%%%%%%%%%%%%%%%%%%%%%%%%%% Address Authors %%%%%%%%%%%%%%%%%%%%%%%%%%%%%%%%%%%%%

\bigskip

%\bigskip
%
\noindent{\footnotesize
\textsc{Instituto de Matem\'atica Aplicada del Litoral (IMAL), UNL, CONICET, FIQ.}

\smallskip
\noindent\textmd{CCT-CONICET-Santa Fe, Predio ``Dr. Alberto Cassano'', Colectora Ruta Nac.~168 km 0, Paraje El Pozo, 3000 Santa Fe, Argentina.}
}
\bigskip

\end{document}